\theoremstyle{plain}
\newtheorem{theorem}{Theorem}
\newtheorem{property}{Property}
\newtheorem{corollary}{Corollary}
\theoremstyle{definition}
\newtheorem{defAlph}{Definition}
\newtheorem{definition}{Definition}
\newtheorem{example}{Example}
\begin{document}

\title{Extremal Indices in the Series Scheme\\ and their Applications\footnote{This is a 
translation (from Russian) of https://doi.org/10.14357/19922264150305 }}

\author{A. V. Lebedev\\ \small
Faculty of Mechanics and Mathematics, Lomonosov Moscow State University,\\[-3pt]
\small Leninskiye Gory, Moscow 119991, Russian Federation}

\date{}

\maketitle

\begin{abstract}
We generalize the concept of extremal index of a stationary random sequence to the
series scheme of identically distributed random variables with random series sizes
tending to infinity in probability. We introduce new extremal indices through two
definitions generalizing the basic properties of the classical extremal index. We
prove some useful properties of the new extremal indices. We show how the behavior of
aggregate activity maxima on random graphs (in information network models) and the
behavior of maxima of random particle scores in branching processes (in biological
population models) can be described in terms of the new extremal indices. We also
obtain new results on models with copulas and threshold models. We show that the new
indices can take different values for the same system, as well as values greater than
one.

\medskip\noindent
{\bf Keywords:} extremal index, series scheme, random graph, information network,
branching process, copula
\end{abstract}

\section{Introduction}

The extremal index of a stationary (in a narrow sense) random sequence $\{\xi_n\}$ is
defined as follows \cite[Section~3.7]{LLR}.

\renewcommand{\thedefAlph}{\Alph{defAlph}}

\begin{defAlph}\label{defA}\hskip-1ex\footnote{In what follows, by $\vee$ we
denote the maximum; by $\wedge$, the minimum; by an overbar above a distribution
function, we denote its tail: ${\bar F}(x)=1-F(x)$; by $f^{-1}$, the inverse function
of $f$, and for a distribution function, the generalized inverse:
$F^{-1}(y)=\inf\{x:F(x)\ge y\}$; by $f(x)^n$ we denote the $n$th power of
$f(x)$.}\hskip1ex Let $\xi_n$, $n\ge 1$, have distribution $F$, and let
$M_n=\vee_{k=1}^n\xi_k$. If for any $\tau>0$ there exists a number sequence
$u_n(\tau)$ such that $n{\bar F}(u_n(\tau))\to \tau$ and ${\bf P}(M_n\le
u_n(\tau))\to e^{-\theta\tau}$, then $\theta$ is said to be the \emph{extremal
index}.
\end{defAlph}

Any value of $\theta\in [0,1]$ is possible here.

Note that if we take maxima ${\hat M}_n$ of a sequence of independent random
variables with the same distribution $F$, then
$$
\lim_{n\to\infty}{\bf P}({\hat M}_n\le u_n(\tau))=e^{-\tau},
$$
which implies
\begin{equation}\label{Mfunc}
\lim_{n\to\infty}{\bf P}(M_n\le u_n(\tau))=\left(\lim_{n\to\infty}{\bf P}({\hat
M}_n\le u_n(\tau))\right)^\theta;
\end{equation}
i.e., limiting distribution functions for $M_n$ and ${\hat M}_n$ have a power-law
dependence,
\begin{equation}\label{Mtheta}
\lim_{n\to\infty}{\bf P}(M_n\le u_n(\tau))=\lim_{n\to\infty}{\bf P}({\hat M}_{[\theta
n]}\le u_n(\tau)),\quad\theta>0;
\end{equation}
i.e., $M_n$ asymptotically grows as the maximum of $[\theta n]$ independent random
variables as $n\to\infty$ and
\begin{equation}\label{Mhat}
\lim_{n\to\infty}{\bf P}(M_n\le u_n(\tau))\ge \lim_{n\to\infty}{\bf P}({\hat M}_n\le
u_n(\tau));
\end{equation}
i.e., $M_n$ is stochastically not greater than the maximum of independent random
variables (in the limit).

The interest to the extremal index is partly due to the fact that its existence
preserves the extremal type of the limiting distribution of the maxima. Recall that
if for some number sequences $a_n>0$ and $b_n$, $n\ge 1$, and for a nondegenerate
distribution $G$ there exists a limit
$$
\lim_{n\to\infty}{\bf P}({\hat M}_n\le a_nx+b_n)=G(x),\quad \forall x\in\mathbb{R},
$$
then $G$ belongs to one of three extremal types, namely: $G(x)=G_i(ax+b)$ for some
$a>0$ and $b$, where $G_1(x)=\exp\{-e^{-x}\}$ (Gumbel type),
$G_2(x)=\exp\{-x^{-\alpha}\}$, $x>0$, $\alpha>0$ (Fr\'echet type), and
$G_3(x)=\exp\{-(-x)^\alpha\}$, $x\le 0$, $\alpha>0$ (Weibull type). Such
distributions $G$ are referred to as max-stable or extreme value distributions. For
any $s>0$ there exist $a(s)>0$ and $b(s)$ such that $G^s(x)=G(a(s)x+b(s))$. Thus,
raising the limiting distribution function to the power $\theta>0$, which arises
because of property \eqref{Mfunc}, preserves the extremal type.

One of the interpretations of the extremal index consists in the fact that passages
over a high level in a sequence occur not one at a time but in batches (clusters) of
average size $1/\theta$. In applications, this can mean natural disasters, failures
in technical systems, data losses in information transmission, financial losses, etc.
Clearly, if such events happen several times in succession, this is much more
dangerous than single occurrences and must be taken into account in risk management.

For more details on this subject, see \cite{LLR, Gal, EKM, HF}.

Since 1980s, active investigations in this field have been made in two main
directions: finding the extremal index for various random sequences and constructing
statistical estimators for the extremal index based on observations.

For a survey of results and references, one can see, e.g., \cite[Section~8.1]{EKM}
and \cite[Section~5.5]{HF}. Section~1.2 in the dissertation \cite{Novak-2013} was
specially devoted to generalizations of the classical notion of the extremal index
and its statistical estimation. In particular, the following definition can be given.

\begin{defAlph}\label{defB}
Let $\xi_n$, $n\ge 1$, have distribution $F$, and let $M_n=\vee_{k=1}^n\xi_k$. If for
each number sequence $u_n$, $n\ge 1$, such that
$$
0<\liminf_{n\to\infty}n{\bar F}(u_n)\le\limsup_{n\to\infty}n{\bar F}(u_n)<\infty,
$$
we have ${\bf P}(M_n\le u_n)-F(u_n)^{\theta n}\to 0$, $n\to\infty$, then $\theta$ is
called the \emph{extremal index}.
\end{defAlph}

This definition allows to extend the notion of the extremal index to some stationary
sequences of random variables with discrete distributions (for instance, geometric),
and for continuous distributions it is equivalent to Definition~\ref{defA}.

Papers \cite{Markovich1, Markovich2} were devoted to the analysis of extrema and
passage over a high level related to telecommunication models, and in
\cite{Markovich-new} distributions and dependence of extrema in network sampling
processes were studied, including the extremal indices.

In the dissertation \cite{Gold}, new interesting results are obtained concerning
extremal indices of sequences of the form
$$
X_n=A_nX_{n-1}+B_n,
$$
where $(A_n,B_n)$, $n\ge 1$, are independent random pairs taking values in
$\mathbb{R}_+^2$. In some cases, extremal indices and distributions of cluster sizes
are explicitly computed, and in the more general case, upper and lower bounds on the
extremal index are obtained. Continuity of the extremal index is proved with respect
to a certain convergence of distributions of the coefficients. Indices of
multivariate sequences with heavy tails are introduced and analyzed. A part of the
obtained results is presented in \cite{Gold1, Gold2}.

However, in practice it is necessary to study maxima on more complex structures than
the set of natural numbers. Difficulties related with this were discussed as far back
as in \cite[Sections~3.9 and~3.12]{Gal}. For example, if we consider lifetimes of
components of a compound system (reliability scheme), it is not clear how we can
enumerate them so that the model of a~stationary sequence can be used, nor whether
this is possible in principle. A little simpler is the case of random fields.

The extremal index can naturally be extended from random sequences to random fields
on lattices $\mathbb{N}^d$ \cite{Choi}. Consider, for example, a random field
$\{\xi_{n_1,n_2}\}$ in $\mathbb{N}^2$, and let
$M_{n_1,n_2}=\vee_{k_1=1}^{n_1}\vee_{k_2=1}^{n_1}\xi_{k_1,k_2}$. If for each
$\tau>0$ there exists $u_{n_1,n_2}(\tau)$ such that $n_1n_2{\bar
F}(u_{n_1,n_2}(\tau))\to \tau$ and ${\bf P}(M_{n_1,n_2}\le u_{n_1,n_2}(\tau))\to
e^{-\theta\tau}$, then $\theta$ is called the extremal index. To the computation of
the extremal index of a random field in $\mathbb{N}^2$, the paper \cite{Exp1} is
devoted; in \cite{Exp2}, the asymptotic location of the maximum of a random field
with a certain extremal index was studied.

Since the above-mentioned results are not directly related to the subject of the
present paper, we shall not dwell on them further, but we only emphasize the
relevance of studying the extremal index in various models and applications.

Below we present a new generalization of the extremal index to a series scheme with
random sizes, which allows one to deal with a wider class of stochastic structures.
Moreover, we give two different definitions.

Consider a collection of random variables $\xi_{n,m}$, $n\ge 1$, $m\ge 1$, with
distributions $F_n$ (here~$n$ is the series number, and $m$ is the number of the
random variable in a series) and also a sequence of integer random variables (series
sizes) $\nu_n\stackrel{P}{\to}+\infty$, $n\to\infty$, and let
$M_n=\vee_{m=1}^{\nu_n}\xi_{n,m}$.

\begin{definition}\label{def1}\hskip-1ex\footnote{As compared to Definition~\ref{defA},
we have made the change of variables $s=e^{-\tau}$ and accordingly redefined the
functions $u_n$, $n\ge 1$.}\hskip1ex Let for each $s\in (0,1)$ there exist a sequence
$u_n(s)$ such that ${\bf E}F_n(u_n(s))^{\nu_n}\to s$ and ${\bf P}(M_n\le
u_n(s))\to\psi(s)$, $n\to\infty$. The we call $\psi$ the \emph{extremal function}. If
$\psi(s)=s^\theta$, we call $\theta$ the \emph{extremal index}.
\end{definition}

In the general case, we can define partial indices
$$
\theta^+=\sup_{s\in (0,1)}\log_s\psi(s),\qquad \theta^-=\inf_{s\in
(0,1)}\log_s\psi(s);
$$
then $\theta^+\ge \theta^-$ and $s^{\theta^+}\le\psi(s)\le s^{\theta^-}$, $s\in
(0,1)$.

The essence of Definition~\ref{def1} consists in comparing the limiting distributions
of $M_n$ and of the maxima ${\hat M}_n$ of $\nu_n$ independent random variables (the
number $\nu_n$ being independent of them) under the same normalization given by the
condition ${\bf P}({\hat M}_n\le u_n(s))\to s$, $n\to\infty$. Thus, we generalize
property \eqref{Mfunc}.

It is clear that the indices, as above, take nonnegative values, but the upper
boundary 1 is removed, at least for $\theta^+$, as will be shown below
(Examples~\ref{ex5.5}, \ref{ex6.1}, and~\ref{ex6.2}). This happens because inequality
\eqref{Mhat} can be violated. The maxima over series can grow asymptotically faster
than the maxima of independent random variables taken in the same quantity, which
corresponds to the case $\psi(s)<s$, $s\in (0,1)$.

\begin{definition}\label{def2}
Let for each $s\in (0,1)$ there exist a sequence $u_n(s)$ such that ${\bf
E}F_n(u_n(s))^{\nu_n}\to s$ and ${\bf P}(M_n\le u_n(s))-{\bf
E}F_n(u_n(s))^{\theta\nu_n}\to 0$, $n\to\infty$; then $\theta$ is called the
\emph{extremal index}.
\end{definition}

The essence of Definition~\ref{def2} consists in choosing a value of $\theta$ such
that the limiting distributions of $M_n$ and of the maxima of $[\theta\nu_n]$
independent random variables (the number $[\theta\nu_n]$ being independent of them)
coincide under the same normalization as in Definition~\ref{def1} (for $\theta>0$).
Thus, property \eqref{Mtheta} is generalized.

The existence of an extremal index in the sense of Definition~\ref{def2} actually
means that the extremal function from Definition~\ref{def1} admits the representation
$$
\psi(s)=\lim_{n\to\infty}{\bf E}F_n(u_n(s))^{\theta\nu_n}.
$$

A question arises of why we give two definitions; cannot we do with only one? Indeed,
in many cases both definitions of the index are equivalent: they exists and are equal
to each other (Section~\ref{sec3}, Example~\ref{ex5.1}). However, it also happens
that both indices do not exist but in the sense of Definition~\ref{def1} there exists
an extremal function and partial indices (Examples~\ref{ex5.2}, \ref{ex5.3},
\ref{ex6.1}, and~\ref{ex6.2}); it also happens that the index in the sense of
Definition~\ref{def2} exists while the index in the sense of Definition~\ref{def1}
does not exist and both the extremal function and partial indices again exist
(Section~\ref{sec4}); finally, there can be a surprising situation where both indices
do exist but take \emph{different\/} values (Example~\ref{ex5.4}). Thus, these are
indeed two different characteristics of the system which do not reduce to a single
one.

Note that maxima in a series scheme were previously considered in \cite{Sav} for
random variables related by IT-copulas (individuated $t$-copulas), and conditions
were derived under which the maxima asymptotically grow as in the case of independent
variables, i.e., in our terms, $\theta=1$.

To avoid ambiguity in terminology, below we speak about extremal indices of a
\emph{system\/} (of random variables), denoted by $\{\xi_{n,m};\nu_n\}$.

In Section~\ref{sec2} we prove basic properties of the extremal indices; we present
their applications to information network models in Section~\ref{sec3}, to models of
biological populations in Section~\ref{sec4}, to models with copulas in
Section~\ref{sec5}, and to threshold models in Section~\ref{sec6}.

\section{Basic Properties of Extremal Indices}\label{sec2}

The extremal indices have the following properties.

\begin{property}\label{pr1}
Let\/ $\eta_n$, $n\ge 1$, be a stationary sequence with extremal index\/ $\theta$ in
the sense of Definition~\ref{defA}. Put\/ $\xi_{n,m}=\eta_m$, $m\ge 1$, and consider
an integer sequence $l_n\to +\infty$\textup; then the system\/ $\{\xi_{n,m};l_n\}$
has extremal index\/ $\theta$ in the sense of Definitions~\ref{def1} and~\ref{def2}.
\end{property}

\begin{proof}
Denote by $u^0_n(\tau)$, $n\ge 1$, the sequence that exists by Definition~\ref{defA},
and let $u_n(s)=u^0_{l_n}(-\ln s)$; then $F(u_n(s))^{l_n}\to s$, ${\bf P}(M_n\le
u_n(s))\to s^\theta$, and $F(u_n(s))^{\theta l_n}\to s^\theta$, which gives the same
extremal index according to both definitions.
\end{proof}

\begin{property}\label{pr2}
Let a system\/ $\{\xi_{n,m};\nu_n\}$ have an extremal index in the sense of one of
Definitions~\ref{def1} or~\ref{def2}\/ \textup(or an extremal function\textup), and
consider a sequence of functions $g_n(x)$, $n\ge 1$, that are continuous and strictly
increasing on the set of points of increase of\/ $F_n$. Put\/
${\tilde\xi}_{n,m}=g_n(\xi_{n,m})$\textup; then the system\/
$\{{\tilde\xi}_{n,m};\nu_n\}$ has the same extremal index\/ \textup(extremal
function\textup).
\end{property}

\begin{proof}
For the new system, ${\tilde F}_n(x)=F_n(g^{-1}_n(x))$. Let ${\tilde
u}_n(s)=g_n(u_n(s))$; then ${\tilde F}_n({\tilde u}_n(s))=F_n(u_n(s))$ and ${\bf
P}({\tilde M}_n\le {\tilde u}_n(s))= {\bf P}(M_n\le u_n(s))$, so all the limits (in
Definitions~\ref{def1} and~\ref{def2}) do not change.
\end{proof}

\begin{property}\label{pr3}
Let a system\/ $\{\xi_{n,m};\nu_n\}$ have an extremal index in the sense of one of
Definitions~\ref{def1} or~\ref{def2}, and let there exist a sequence $c_n\to +\infty$
such that\/ $\nu_n/c_n\stackrel{P}{\to} 1$, $n\to\infty$\textup; then the system has
the same extremal index in the sense of the other definition.
\end{property}

\begin{proof}
In this case ${\bf E}F_n(u_n(s))^{\nu_n}={\bf E}(F_n(u_n(s))^{c_n})^{\nu_n/c_n}\to
s\in (0,1)$ implies $F_n(u_n(s))^{c_n}\to s$ and ${\bf E}F_n(u_n(s))^{r\nu_n}\to
s^r$, $r\ge 0$, $n\to\infty$. Thus, if $\theta$ is the extremal index in the sense of
Definition~\ref{def1}, then ${\bf P}(M_n\le u_n(s))\to s^\theta$ implies ${\bf
E}F_n(u_n(s))^{\theta\nu_n}\to s^\theta$, and therefore $\theta$ is the extremal
index in the sense of Definition~\ref{def2}. Vice versa, if $\theta$ is the extremal
index in the sense of Definition~\ref{def2}, then ${\bf
E}F_n(u_n(s))^{\theta\nu_n}\to s^\theta$ implies ${\bf P}(M_n\le u_n(s))\to
s^\theta$, and therefore $\theta$ is the extremal index in the sense of
Definition~\ref{def1}.
\end{proof}

\begin{property}\label{pr4}
Consider a system\/ $\{\xi_{n,m};\nu_n\}$ with extremal index\/ $\theta>0$ in the
sense of Definition~\ref{def2} such that\textup:
\begin{enumerate}[\rm(a)]\addtolength{\itemsep}{-5pt}
\item\vskip-5pt
$F_n\equiv F$\textup;
\item
For some max-stable law $G$ and functions $a(r)>0$ and\/ $b(r)$, $r>0$, we
have
$$
F^r(a(r)x+b(r))\to G(x),\quad r\to\infty;
$$
\item
There exists a sequence $c_n\to +\infty$ such that
$\nu_n/c_n\stackrel{d}{\to}\zeta>0$, $n\to\infty$\textup;
\item
In Definition~\ref{def2} we may take $u_n(s)=A_nH^{-1}(s)+B_n$, where $A_n=a(c_n)$,
$B_n=b(c_n)$, and\/ $H(x)$ is a continuous distribution function.
\end{enumerate}\vskip-5pt
Then\footnote{If $\zeta$ is degenerate and is equal to a constant $c>0$ a.s., then,
clearly, ${\bf E}u^\zeta=u^c$.} $H(x)={\bf E}G(x)^\zeta$ and
$$
{\bf P}(M_n\le A_nx+B_n)\to H(ax+b),\quad n\to\infty,
$$
where $a>0$ and\/ $b$ are determined by the identity $G(x)^\theta=G(ax+b)$.
Furthermore, the extremal function in the sense of Definition~\ref{def1} is\/
$\psi(s)=H(aH^{-1}(s)+b)$.
\end{property}

\begin{proof}
By \cite[Corollary~1.3.2]{LLR}, for any max-stable law there exist $a>0$ and $b$ such
that $G(x)^\theta=G(ax+b)$. Let $x=H^{-1}(s)$. Since
$$
{\bf E}F(A_nx+B_n)^{\nu_n}={\bf E}(F(a_{l_n}x+b_{l_n})^{l_n})^{\nu_n/l_n}\to {\bf
E}G(x)^\zeta,\quad n\to\infty,
$$
we have $H(x)={\bf E}G(x)^\zeta$. Then
$$
{\bf E}F(A_nx+B_n)^{\theta\nu_n}\to {\bf E}G(x)^{\theta\zeta}={\bf E}G(ax+b)^\zeta=
H(ax+b),\quad n\to\infty.
$$
By Definition~\ref{def2} this implies ${\bf P}(M_n\le A_nx+B_n)\to H(ax+b)$,
$n\to\infty$, and by Definition~\ref{def1} we obtain $\psi(s)=H(aH^{-1}(s)+b)$.
\end{proof}

\begin{property}\label{pr5}
Consider a system\/ $\{\xi_{n,m};\nu_n\}$ such that\textup:
\begin{enumerate}[\rm(a)]\addtolength{\itemsep}{-5pt}
\item\vskip-5pt
$F_n\equiv F$\textup;
\item
There exists a sequence $c_n\to +\infty$ such that\/
$\nu_n/c_n\stackrel{d}{\to}\zeta>0$, $n\to\infty$\textup;
\item
For a continuous distribution $G$ and coefficients $A_n>0$ and $B_n$, we have
$$
\begin{gathered}
F(A_nx+B_n)^{c_n}\to G(x),\\ {\bf P}(M_n\le A_nx+B_n)\to {\bf
E}G(x)^{\theta\zeta},\quad n\to\infty.
\end{gathered}
$$
\end{enumerate}\vskip-5pt
Then\/ $\theta$ is the extremal index in the sense of Definition~\ref{def2}.
\end{property}

\begin{proof}
First of all, we have ${\bf E}F(A_nx+B_n)^{\nu_n}\to {\bf E}G(x)^\zeta$. Denote
$H(x)={\bf E}G(x)^\zeta$; this is a~continuous function running over all values in
$(0,1)$. Put $x=H^{-1}(s)$, $u_n(s)=A_nx+B_n$; then ${\bf E}F(u_n(s))^{\nu_n}\to s$
and ${\bf E}F(u_n(s))^{\theta\nu_n}\to {\bf E}G(x)^{\theta\zeta}$. Since we also have
${\bf P}(M_n\le u_n(s))\to {\bf E}G(x)^{\theta\zeta}$ by the condition, ${\bf
P}(M_n\le u_n(s))-{\bf E}F(u_n(s))^{\theta\nu_n}\to 0$, $n\to\infty$, so $\theta$ is
the extremal index in the sense of Definition~\ref{def2}.
\end{proof}

\begin{property}\label{pr6}
Consider a system\/ $\{\xi_{n,m}; l_n\}$ with extremal index\/ $\theta$ in the sense
of one of the definitions such that\textup:
\begin{enumerate}[\rm(a)]\addtolength{\itemsep}{-5pt}
\item\vskip-5pt
$F_n\equiv F$ is a continuous distribution\textup;
\item
$l_n$, $n\ge 1$, is an integer sequence, $l_n\to +\infty$, $l_n\sim n^{\alpha}L(n)$,
$n\to\infty$, $\alpha>0$, $L(x)$ being a slowly varying function on\/ $\mathbb{R}_+$.
\end{enumerate}\vskip-5pt
Let\/ $\nu_n/l_n\stackrel{P}{\to} 1$, $n\to\infty$\textup; then the system\/
$\{\xi_{n,m}; \nu_n\}$ has the same extremal index in the sense of both definitions.
\end{property}

\begin{proof}
By Property~\ref{pr3} each of the systems has equal extremal indices in the sense of
both definitions. Denote by $M_n$ the maxima for $\{\xi_{n,m}; l_n\}$, and by
${\tilde M}_n$, for $\{\xi_{n,m}; \nu_n\}$.

For any $\rho>0$ we have $l_{[n\rho]}\sim\rho^\alpha l_n$, $n\to\infty$. Therefore,
$\nu_n/l_n\stackrel{P}{\to} 1$, $n\to\infty$, implies
$$
{\bf P}(M_{[n(1-\varepsilon)]}\le {\tilde M}_n\le M_{[n(1+\varepsilon)]})\to 1,\quad
n\to\infty,
$$
for any $\varepsilon>0$. Since $F(u_n(s))^{l_n}\to s$, we have
$$
u_n(s)=F^{-1}(1+(1+o(1))(\ln s)/l_n),\quad n\to\infty,
$$
and for any $\rho>0$ by virtue of Definition~\ref{def1} we have
$$
\begin{aligned}
{\bf P}(M_{[n\rho]}\le u_n(s))&={\bf P}(M_{[n\rho]}\le F^{-1}(1+(1+o(1))(\ln
s)/l_n))\\ &={\bf P}(M_{[n\rho]}\le F^{-1}(1+(1+o(1))(\ln
s^{1/\rho^\alpha})/l_{[n\rho]}))\to s^{\theta/\rho^\alpha},\quad n\to\infty.
\end{aligned}
$$
Letting $\rho=1\pm\varepsilon$, $\varepsilon>0$, we obtain
$$
s^{\theta/(1+\varepsilon)^\alpha}\le\liminf_{n\to\infty}{\bf P}({\tilde M}_n\le
u_n(s))\le \limsup_{n\to\infty}{\bf P}({\tilde M}_n\le u_n(s))\le
s^{\theta/(1-\varepsilon)^\alpha}.
$$
Passing to the limit as $\varepsilon\to 0$, we obtain $\lim_{n\to\infty}{\bf
P}({\tilde M}_n\le u_n(s))=s^\theta$; hence, $\theta$ is the extremal index of
$\{\xi_{n,m}; \nu_n\}$ in the sense of Definition~\ref{def1}.
\end{proof}

\begin{property}\label{pr7}
Consider a system\/ $\{\xi_{n,m};\nu_n\}$ with extremal index\/ $\theta$ in the sense
of Definition~\ref{def1} for which $F_n\equiv F$ is a continuous distribution,
$\nu_n/n\stackrel{P}{\to} c>0$, $n\to\infty$, and consider a random integer
sequence\/ $\eta_n$ independent of\/ $\{\xi_{n,m};\nu_n\}$ and a sequence\/ $\mu_n\to
+\infty$ such that\/ $\eta_n/\mu_n\stackrel{d}{\to}\zeta>0$, $n\to\infty$. Put\/
${\tilde\xi}_{n,m}=\xi_{\eta_n,m}$, ${\tilde\nu_n}=\nu_{\eta_n}$\textup; then the
system\/ $\{{\tilde\xi}_{n,m};{\tilde\nu}_n\}$ has extremal index\/ $\theta$ in the
sense of Definition~\ref{def2}.
\end{property}

\begin{proof}
For the sequence $u_n(s)$, from Definition~\ref{def1} for the system
$\{\xi_{n,m};\nu_n\}$ we have the convergence ${\bf E}F(u_n(s))^{\nu_n}\to s$, whence
$F(u_n(s))^{cn}\to s$, $n\to\infty$, so that
$$
u_n(s)=F^{-1}(1+(1+o(1))(\ln s)/(cn)),\quad n\to\infty.
$$
For any $x\in (0,1)$ we have
$$
\begin{aligned}
{\bf E}F(u_{[\mu_n]}(x))^{{\tilde\nu}_n}&={\bf E}F(u_{[\mu_n]}(x))^{{\tilde\nu}_n}\\
&={\bf E}F(u_{[\mu_n]}(x))^{(\nu_{\eta_n}/\eta_n)(\eta_n/\mu_n)\mu_n}\to {\bf
E}x^\zeta,\quad n\to\infty.
\end{aligned}
$$
Denote $H(x)={\bf E}x^\zeta$, $x=H^{-1}(s)$, and ${\tilde u}_n(s)=u_{[\mu_n]}(x)$;
then
$$
{\bf E}F({\tilde u}_n(s))^{{\tilde\nu}_n}\to s,\qquad {\bf E}F({\tilde
u}_n(s))^{\theta{\tilde\nu}_n}\to {\bf E}x^{\theta\zeta},\quad n\to\infty.
$$
On the other hand, by Definition~\ref{def1} for the system $\{\xi_{n,m};\nu_n\}$ we
obtain
$$
\begin{aligned}
{\bf P}({\tilde M}_n\le{\tilde u}_n)&={\bf P}(M_{\eta_n}\le u_{[\mu_n]}(x))\\ &={\bf
P}(M_{\eta_n}\le F^{-1}(1+(1+o(1))(\ln x)/(c\mu_n)))\\ &={\bf P}(M_{\eta_n}\le
F^{-1}(1+(1+o(1))(\ln x^{\eta_n/\mu_n})/(c\eta_n)))\to {\bf E}x^{\theta\zeta},\quad
n\to\infty.
\end{aligned}
$$
Hence, ${\bf P}({\tilde M}_n\le{\tilde u}_n)-{\bf E}F({\tilde
u}_n(s))^{\theta{\tilde\nu}_n}\to 0$, $n\to\infty$, and $\theta$ is the extremal
index of $\{{\tilde\xi}_{n,m};{\tilde\nu}_n\}$ in the sense of Definition~\ref{def2}.
\end{proof}

Let us give some comments on the above-proved properties.

Property~\ref{pr1} means that the introduced indices are indeed generalizations of
the classical extremal index (in the sense of Definition~\ref{defA}) and coincide
with it if, as series, we take deterministically growing segments of a stationary
sequence.

Property~\ref{pr2} means invariance of the extremal indices under continuous strictly
increasing transforms of a series. This means, for instance, that in the case of
continuous random variables they all can be reduced to the uniform distribution on
$[0,1]$ by a transform with $g_n=F_n$. A~similar property holds for the classical
extremal index (in the sense of Definition~\ref{defA}) when we speak about a single
continuous strictly increasing transform applied to all elements of the sequence.

Property~\ref{pr3} specifies a restriction on the randomness of series sizes under
which both new indices are equivalent. The sizes must asymptotically grow
equivalently to a nonrandom sequence.

Property~\ref{pr4} generalizes the well-known statement for the classical extremal
index \cite[Corollary~3.7.3]{LLR}: the limiting distribution of maxima of a
stationary sequence has the same extremal type as the limiting distribution of maxima
of independent random variables with the same marginal distribution. In this case the
limiting law need not be max-stable, but its type is preserved. It is max-stable only
for a degenerate random variable $\zeta$ (a constant), i.e., under the conditions of
Property~\ref{pr3}.

Property~\ref{pr5} allows one to interpret a parameter of the limiting distribution
of maxima of dependent random variables as the extremal index in the sense of
Definition~\ref{def2}.

Property~\ref{pr6} provides a sufficient condition on the growth rate of series sizes
under which the indices for random and nonrandom variables coincide.

Property~\ref{pr7} shows that randomization over a randomly growing series number
allows to pass from the extremal index in the sense of Definition~\ref{def1} to the
same index in the sense of Definition~\ref{def2}.

\section{Applications to Information Network Models}\label{sec3}

In the papers \cite{Leb3, Leb4, Leb-Nc} the author considered maxima of aggregate
activity in information networks described by power-law random graphs (also
referred to as Internet or Internet-type graphs in the Russian literature).

As examples of recent works of Russian authors on power-law graphs, we refer the
reader to \cite{Pavl, Leri} and a survey \cite{Raig}. We also recommend a foreign
electronic textbook \cite{Hofstad}.

Consider the following example of an information network model \cite{Leb4}.

Let each network node have an individual random information activity (rate of
information production). We assume that activities of the nodes are independent and
identically distributed and that their distribution~$A$ has a heavy (regularly
varying) tail, i.e.,  ${\bar A}(x)\sim x^{-a}L(x)$, $x\to\infty$, $a>0$, where $L(x)$
is a slowly varying function. Activities and degrees of vertices (nodes) are assumed
to be independent, and this assumption is essential.

Consider the model of a directed random graph where edge directions correspond to
directions of information transmission. Assume that we have $n$ vertices and there
are independent nonnegative integer random variables $K_1,\ldots, K_n$ with the same
distribution defined by the probabilities $p_k\sim ck^{-\beta}$, $k\to\infty$,
$\beta>2$. Let $D_i=\min\{K_i,n-1\}$. For the $i$th vertex, choose at random
(equiprobably and independently of the choices for other vertices) $D_i$ different
vertices among the others (except for the $i$th vertex) and draw edges from them to
the $i$th vertex. The resulting graph can be regarded as a power-law graph in the
sense that the number of incoming edges has asymptotically a power-law distribution.
The aggregate activity at a node is in this case defined to be the sum of its own
activity and activities of all nodes from which information is coming (its incoming
neighbors).

We emphasize that the activity has no relation to the notions of ``quality'' or
``weight'' of a vertex, used in modern random graphs models. In our case a graph is
formed by the above-described algorithm, and individual activities are independent
complements to the graph. As regards social networks, it may happen that a user
writes much but reads little (or is read by few), or, on the contrary, writes little
but reads much (or is read by many). As for the aggregate activity, it may happen to
be large because of few incoming neighbors (or even a~single neighbor) with large
individual activities, or, on the contrary, small but with a large number of
incoming neighbors having low individual activities. As is well known, for heavy
tails it is typical that large values of a sum are attained at the expense of one
large (maximal) summand. This property is extended in this case to sums of randomly
many summands as well.

Denote by $M_n$ the maximum of aggregate activities. Let $v(r)$ be a positive
nondecreasing function such that $r{\bar A}(v(r))\to 1$, $r\to\infty$. Note that
$v(r)$ definitely exists and is regularly varying with exponent $1/a$
\cite[Section~1.5]{Sen}.

Then, for $a<\beta-2$ if $2<\beta<3$ and for $a<(\beta-1)/2$ if $\beta\ge 3$, the
Fr\'echet limit law holds: ${\bf P}(M_n/v(n)\le x)=\exp\{-x^{-a}\}$, $x>0$,
$n\to\infty$. Note that this limit law is due to the fact that the maximum of
aggregate activities grows asymptotically equivalently (in probability) to the
maximum of individual activities over the network, which is proved in
\cite[Theorem~1]{Leb4}.

On the other hand, if the number of incoming neighbors is described by a random
variable~$K$ independent of the activity, the limiting distribution $F$ of the
aggregate activity at each node has a tail
$$
{\bar F}(x)\sim (1+{\bf E}K){\bar A}(x),\quad x\to\infty,
$$
under the condition (which is fulfilled in this case)
$$
{\bf E}K^{1\vee (a+\varepsilon)}<\infty,\quad\varepsilon>0,
$$
according to the results of \cite{Stam} on the distribution of a sum of randomly many
independent random variables with a heavy tail. Therefore,
$$
F(xv(n))^n\to\exp\{-(1+{\bf E}K)x^{-a}\},\quad x>0,\quad n\to\infty.
$$
Denoting $s=\exp\{-(1+{\bf E}K)x^{-a}\}\in (0,1)$, $u_n(s)=xv(n)$, we conclude that
the system of individual activities has extremal index $\theta=1/(1+{\bf E}K)$ in the
sense of Definition~\ref{def1} (and thus, also in the sense of Definition~\ref{def2}
by Property~\ref{pr3}, since $\nu_n=n$).

The value $\theta\in (0,1)$ for a sequence means that passages over a high level
occur not one at a time but in batches (clusters) of average size $1/\theta$
\cite[Section~8.1]{EKM}. In our case we may also conjecture on forming of such
clusters.

In respect to information networks, this may concern batches of nodes with high
aggregate activities caused by high individual activity of a single node that is
their common incoming neighbor.

Checking the presence of this effect in real-world networks, of course, requires
experimental investigation, which is beyond the scope of this theoretical paper.

Also, the author must admit that the choice of a random graph model in \cite{Leb4}
was determined not by its advantages in the description of real-world networks
against other modern models (e.g., scale-free models) but by a relatively simple
proof of the asymptotic equivalence of the growth of maxima of aggregate and
individual activities with the use of methods of the author's paper \cite{Leb-2005c}.
Note that merely knowing the power law for the number of incoming vertices is
absolutely insufficient, and each random graph model should be analyzed individually.
For example, the growth rate of the maximum vertex degree in the graph is of
importance. If we enforcingly cut the vertex degrees at a growing (with the number of
vertices) threshold, we can obtain a class of models with the same limiting
distributions of vertex degrees but with different growth rates of the maximum
degree, for which different constraints on $a$ will occur depending on $\beta$. Other
nuances also play a role.

Recently, the author has obtained new results for simple models with weights
\cite{Leb-Nc}. Similar models were studied in \cite[ch.~6]{Hofstad} as generalized
random graphs. It is assumed that vertices are assigned with independent weights
$w_i$, $1\le i\le n$, identically distributed as a nonnegative random variable $W$,
${\bf E}W^\beta<\infty$, $\beta\ge 1$.

In Model 1 we assume $p_i=\varphi(w_in^{-s/2})$, where $0<s\le 1$, and for $\varphi$
on $\mathbb{R}_+$ we have $0\le \varphi(x)\le \min\{1,x\}$, $\varphi(x)\sim x$, $x\to
0$. For known values of $w_i$, $1\le i\le n$, every pair of vertices $i$ and $j$ is
joined by an edge with probability $p_ip_j$ independently of other pairs. The graph
is assumed to be undirected; information is transmitted along an edge in both
directions. In respect to social networks, weights may reflect sociability of users.

In Model 2, under the same assumptions on the weights, we assume
$p_i=\varphi(w_in^{-s})$, $0<s\le 1$. For known values of $w_i$, $1\le i\le n$, the
$i$th vertex is entered by an edge from any other vertex with probability $p_i$
independently of other edges. The graph is assumed to be directed; information is
transmitted in the direction of an edge. In respect to social networks, weights may
reflect inquisitiveness of users.

In both cases the author has proved asymptotic equivalence of the growth of maxima of
aggregate and individual activities under certain restrictions on $a$ depending on
$\beta$ and $s$. For $s=1$ there exist limiting distributions of the number of
neighbors (incoming neighbors), and the obtained results can be interpreted as
existence of extremal indices: $\theta=1/(1+({\bf E}W)^2)$ in Model~1 and
$\theta=1/(1+{\bf E}W)$ in Model~2 (in the sense of both definitions), similarly to
the preceding example.

Application of the developed method to other---more complicated and popular---models
of information networks is a subject of further study.

\section{Application to Models of Biological Populations}\label{sec4}

As models of biological populations, branching processes are often used. Elements of
a population are traditionally referred to as particles. Particles may possess some
(quantitative) random scores.

In the case of living organisms, these may be size, weight, or other characteristics
such as yield of milk in cows, egg production in hens, crop capacity in plants,
sensitivity of organisms to harmful and dangerous factors, etc.

Propagation of computer viruses can also be described by branching processes.
Polymorphic computer viruses can not only propagate but also change their codes
(similarly to mutations in living organisms). As scores, one may consider certain
characteristics of the virus code or its vital activity.

In \cite{AV, Pakes}, maxima of independent random scores of particles in branching
processes were studied. As applications, \cite{AV} considered man height, and in
\cite{Pakes} horse racing was mentioned, with prize points as the score.

Let us give one more example. If there is a colony of harmful organisms with
different individual sensitivity thresholds to some factor (poison, antibiotic,
etc.), then one needs the maximum concentration to exterminate the whole colony,
since otherwise it will survive and propagate again.

In a series of works, the author considered maxima of random scores of particles in
supercritical branching processes without extinction (with finite mean and variance
of the number of descendants). Thus, \cite{Leb5} considered continuous-time
processes, and in \cite{Leb60, Leb6}, discrete-time processes were addressed.
However, scores of different particles were assumed to be independent. In \cite{Leb7}
there was for the first time studied a model with dependence of particle scores in a
generation caused by their common heredity.

First, there was considered the case where the scores have a standard normal
distribution and the correlation coefficient for scores of a pair is majorized by
$r^k$, $r\in (0,1)$, if these particles have the nearest common ancestor $k$
generations back. It was shown that maxima over generations grow asymptotically in
the same way as in the case of independent scores, which corresponds to $\theta=1$.

Next, there was considered the case where the scores have a distribution with a
regularly varying tail and the heredity is explicitly described by a linear
autoregression process of the first order:
\begin{equation}\label{lireg}
\xi_{n,m}=a\xi_{n-1,\kappa(n,m)}+b\xi^*_{n,m},\quad a\in (0,1),\quad b>0,
\end{equation}
where $\xi_{n,m}$ is the score of the $m$th particle in the $n$th generation,
$\kappa(n,m)$ is the number of the ancestor of this particle in the preceding
generation, and the random variables $\xi^*_{n,m}$, $m\ge 1$, $n\ge 1$, are
independent and have the same distribution $A$ satisfying the conditions
\begin{equation}\label{prav}
{\bar A}(x)\sim x^{-\gamma}L(x),\quad A(-x)/{\bar A}(x)\to p\ge 0, \quad
x\to\infty,\quad\gamma>0,
\end{equation}
where $L(x)$ is a slowly varying function.

In the model \eqref{lireg}, a unique stationary distribution $F$ exists. It is
assumed that all particle scores have this stationary distribution. To reveal the
role of heredity ``in pure form,'' it is desirable to ensure independence of the
score distribution from the autoregression coefficients (as was the case in the
Gaussian framework). Here we can reach this goal only for strictly stable
distributions with $0<\gamma<2$ by putting
\begin{equation}\label{uab}
a^\gamma+b^\gamma=1.
\end{equation}
For arbitrary distributions $A$ satisfying \eqref{prav}, condition \eqref{uab}
ensures asymptotic equivalence of the tails: ${\bar F}(x)\sim {\bar A}(x)$,
$x\to\infty$. We assume this condition to be fulfilled.

In this case it is in fact shown that $\theta=(1-a^\gamma)/(1-a^\gamma/\mu)\in (0,1)$
in the sense of Definition~\ref{def2} (by Property~\ref{pr5}), where $\mu>1$ is the
mean number of descendants. Note that $\theta$ tends to 1 both as the dependence
parameter $a$ decreases and as the mean number $\mu$ of descendants decreases. An
extremal index in the sense of Definition~\ref{def1} in this case does not exist.

Here we may also expect cluster forming. Clearly, this concerns groups of kindred
particles having a common ancestor with an abnormally large score and inheriting this
mutation. This conclusion is illustrated by computer simulation \cite{Leb7}.

\section{Models with Copulas}\label{sec5}

Recall some notions of copula theory [\citen{Nel}; \citen{QRM}, ch.~5 and
Section~7.5].

A copula $C$ is a multivariate distribution function on $[0,1]^d$, $d\ge 2$, if all
marginal distributions are uniform on $[0,1]$. By Sklar's theorem, any multivariate
distribution function in $\mathbb{R}^d$ can be represented as
$$
G(x_1,\ldots, x_d)=C(G_1(x_1),\ldots, G_d(x_d)),
$$
where $G_i$, $1\le i\le d$, are marginal distribution functions. Thus, to any
multivariate distribution there corresponds its copula. If the marginal distributions
are continuous, such a representation is unique.

To a vector with independent components there corresponds the independence copula
$$
C(y_1,\ldots,y_d)=y_1\ldots y_d.
$$

At present, the mathematical apparatus of copulas is actively used in quite diverse
applications and, in particular, spreads into information science and technology.
Note the paper \cite{Chat} on recursive neural networks, where Student, Clayton, and
Gumbel copulas were used. Based on them, successful learning of a humanlike robot was
performed.

In the general case, copulas may describe dependence in the behavior of components of
compound systems caused by their interaction or the influence of common external
factors. In models of the preceding sections, dependence of aggregate activities in a
network or particle scores in a generation can also be described by some copulas,
which, however, are hard to write out explicitly, making other methods preferable.
Note that communication of network users may lead to dependence of their individual
information activities, which was not taken into account in \cite{Leb4}. In
engineering systems, deterioration or breakage of some parts may affect other parts,
and all parts are influenced by a common operation regime (temperature, humidity,
etc.).

In financial models, copulas are used to describe the dependence between fluctuations
of exchange rates of various shares and currencies \cite{QRM}. This dependence must
be taken into account both in financial arrangements and in programming trading
(financial) bots (black boxes).

Below we will assume for simplicity that $\nu_n=n$ (triangular scheme), $F_n(x)\equiv
x$, $x\in [0,1]$, and random variables $\xi_{n,m}$, $1\le m\le n$, are related by an
$n$-variate copula $C_n$. Recall that we can pass to the uniform distribution from
any continuous distribution by Property~\ref{pr2}.

Let for any $s\in (0,1)$ the sequence $u_n(s)$ be such that $u_n(s)^n\to s$,
$n\to\infty$; then $u_n(s)=1+(1+o(1))(\ln s)/n$, $n\to\infty$.

\numberwithin{example}{section} \numberwithin{theorem}{section}
\numberwithin{corollary}{section}

\begin{example}\label{ex5.1} \textbf{Gumbel--Hougaard copula.}
This copula is of the form
$$
C(y_1,\ldots,y_d)=\exp\left\{-\left(\sum_{i=1}^d(-\ln
y_i)^\alpha\right)^{1/\alpha}\right\},\quad \alpha\ge 1,
$$
which implies
$$
C(y,\ldots,y)=y^{d^{1/\alpha}}.
$$
Assuming $C_n$ to be the Gumbel--Hougaard copula with $\alpha_n\ge 1$ and
$(\alpha_n-1)\ln n\to\gamma\ge 0$, we obtain
$$
{\bf P}(M_n\le u_n(s))=u_n(s)^{n^{1/\alpha_n}}\to s^\theta,\quad
\theta=e^{-\gamma}\in [0,1].
$$

This copula belongs to the class of extreme value (or max-stable) copulas. In the
general case, they have the Pickands representation \cite[p.~312, Theorem~7.45]{QRM}:
$$
C(y_1,\ldots,y_d)=\exp\left\{B\left(\frac{\ln y_1}{\sum_{i=1}^d\ln
y_i},\ldots,\frac{\ln y_1}{\sum_{i=1}^d\ln y_i}\right) \sum_{i=1}^d\ln y_i\right\},
$$
where
$$
B(w_1,\ldots, w_d)=\int_{S^d}\left(\bigvee_{i=1}^d x_iw_i\right)\,dH(x)
$$
and $H$ is a finite measure on $S^d=\{x=(x_1,\ldots, x_d): x_i\ge 0, \sum_{i=1}^d
x_d=1\}$. Moreover, this measure should be normalized so that
$\int_{S^d}x_i\,dH(x)=1$ for all $1\le i\le d$ (which was forgotten to be mentioned
in \cite{QRM}).

Note that the function $B$ is first-order homogeneous. Thus, in the general case we
have
$$
C(y,\ldots,y)=y^{B(1,\ldots, 1)}.
$$
Denote $\beta_n=B_n(1,\ldots, 1)$; thus, if $\beta_n/n\to\theta$, then $\theta$ is
the extremal index (in the sense of both definitions). Since $0\le\beta_n\le n$, we
have $\theta\in [0,1]$.
\end{example}

\begin{example}\label{ex5.2} \textbf{Clayton copula.}
This copula is of the form
$$
C(y_1,\ldots,y_d)=\left(\sum_{i=1}^d y_i^{-\alpha}-d+1\right)^{-1/\alpha},\quad
\alpha\ge 0,
$$
where the degenerate case $\alpha=0$ corresponds to the independence copula, arising
in the limit as $\alpha\to 0$. Hence,
$$
C(y,\ldots,y)=\left(d (y^{-\alpha}-1)+1\right)^{-1/\alpha}.
$$
Let $C_n$ be the Clayton copula with $\alpha_n\equiv \alpha>0$; then
$$
\begin{aligned}
{\bf P}(M_n\le u_n(s))&=\left(n(u_n(s)^{-\alpha}-1)+1\right)^{-1/\alpha}\\ & \to
\left(1-\alpha\ln s\right)^{-1/\alpha}=\psi(s).
\end{aligned}
$$
Here $\theta^-=0$ and $\theta^+=1$.
\end{example}

\begin{example}\label{ex5.3} \textbf{Frank copula.}
This copula is of the form
$$
C(y_1,\ldots,y_d)=-\frac{1}{\alpha}\ln\left(1-\frac{\prod_{i=1}^d(1-e^{-\alpha
y_i})}{(1-e^{-\alpha})^{d-1}}\right), \quad \alpha\ge 0,
$$
where the degenerate case $\alpha=0$ corresponds to the independence copula, arising
in the limit as $\alpha\to 0$. Hence,
$$
C(y,\ldots,y)=-\frac{1}{\alpha}\ln\left(1-\frac{(1-e^{-\alpha
y})^d}{(1-e^{-\alpha})^{d-1}}\right).
$$
Let $C_n$ be the Frank copula with $\alpha_n\equiv \alpha>0$; then, passing to the
limit, we obtain
$$
{\bf P}(M_n\le u_n(s))\to
-\frac{1}{\alpha}\ln\left(1-(1-e^{-\alpha})s^{\alpha/(e^\alpha-1)}\right)=\psi(s).
$$
In this case
$$
\lim_{s\to 0}\log_s\psi(s)=\alpha/(e^\alpha-1),\qquad \lim_{s\to 1}\log_s\psi(s)=1,
$$
and in the interval $(0,1)$ the function attains intermediate values. Therefore,
$\theta^-=\alpha/(e^\alpha-1)\in (0,1)$ and $\theta^+=1$.
\end{example}

All three examples deal with strictly Archimedean copulas. Recall that a copula is
said to be strictly Archimedean if it is of the form
\begin{equation}\label{copu}
C(y_1,\ldots,y_d)=\varphi^{-1}\left(\sum_{i=1}^d \varphi(y_i)\right),
\end{equation}
where $\varphi$ is a decreasing function on $[0,1]$, called the generator,
$\varphi(0)=+\infty$, $\varphi(1)=0$. For $d=2$, it suffices that the function is
convex. If we require the function $\varphi^{-1}$ to be completely monotone on
$(0,+\infty)$, then equation \eqref{copu} defines a copula for any $d\ge 2$
\cite[Theorem~4.6.2]{Nel}. Below we assume this condition on $\varphi$ to be
fulfilled.

On the other hand, the function $f$ is the Laplace--Stieltjes transform of some
distribution if and only if $f$ is completely monotone and $f(0)=1$ \cite[ch.~13,
Section~4, Theorem~1]{Fel}. Hence it follows that $\varphi^{-1}$ must be the
Laplace--Stieltjes transform of some distribution, and by the condition
$\varphi(0)=+\infty$ (and therefore $\varphi^{-1}(+\infty)=0$), this distribution
must have no atoms at zero. Thus, there exists a random variable $\zeta>0$ a.s.\ such
that
$$
\varphi^{-1}(u)={\bf E}e^{-u\zeta},\quad u\ge 0.
$$
Introduce the notation
$$
x_0=\inf\{x>0: {\bf P}(\zeta\le x)>0\},\qquad \mu={\bf E}\zeta.
$$
For brevity, we denote $f(u)=\varphi^{-1}(u)$.

\begin{theorem}\label{th5.1}
Let\/ $\mu<\infty$\textup; then we have the extremal function\/ $\psi(s)=f(-(\ln
s)/\mu)={\bf E}s^{\zeta/\mu}$, $\theta^+=1$, $\theta^-=x_0/\mu$.
\end{theorem}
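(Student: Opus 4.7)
\medskip

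\noindent\textbf{Proof plan.} Since $C_n$ is strictly Archimedean with generator $\varphi$, by \eqref{copu} and the equal-argument specialization we have
$$
{\bf P}(M_n\le u_n(s))=C_n(u_n(s),\ldots,u_n(s))=\varphi^{-1}\bigl(n\varphi(u_n(s))\bigr)=f\bigl(n\varphi(u_n(s))\bigr).
$$
The plan is to show that $n\varphi(u_n(s))\to -(\ln s)/\mu$ and pass to the limit under the continuous function $f$. To this end I would first exploit $f=\varphi^{-1}$ and the identity $f(\varphi(u))=u$: since $f$ is the Laplace--Stieltjes transform ${\bf E}e^{-u\zeta}$, it is smooth on $[0,\infty)$ with $f(0)=1$ and $f'(0)=-{\bf E}\zeta=-\mu$ (finite by hypothesis). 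Hence $\varphi$ is differentiable at $u=1$ with $\varphi'(1)=1/f'(0)=-1/\mu$, which gives $\varphi(u)\sim (1-u)/\mu$ as $u\to 1^-$. Combined with $u_n(s)=1+(1+o(1))(\ln s)/n$, this yields $n\varphi(u_n(s))\to -(\ln s)/\mu$, and continuity of $f$ gives
$$
\psi(s)=f(-(\ln s)/\mu)={\bf E}\exp\{(\ln s)\zeta/\mu\}={\bf E}s^{\zeta/\mu}.
$$

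Next I would derive $\theta^+$ and $\theta^-$ from the representation $\psi(s)={\bf E}s^{\zeta/\mu}$. For the upper partial index, since $s\in(0,1)$ makes $w\mapsto s^w=e^{w\ln s}$ convex, Jensen's inequality yields $\psi(s)\ge s^{{\bf E}\zeta/\mu}=s$, so $\log_s\psi(s)\le 1$; a first-order expansion of $s^{\zeta/\mu}$ around $s=1$ (justified by dominated convergence, using $\zeta/\mu$ integrable) shows $\log_s\psi(s)\to 1$ as $s\to 1^-$, hence $\theta^+=1$. For the lower partial index, the definition of $x_0$ gives $\zeta\ge x_0$ a.s., so $s^{\zeta/\mu}\le s^{x_0/\mu}$, whence $\psi(s)\le s^{x_0/\mu}$ and $\log_s\psi(s)\ge x_0/\mu$. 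It remains to show that this bound is attained in the limit $s\to 0^+$, which is the standard Laplace-transform asymptotic $-t^{-1}\ln{\bf E}e^{-t\zeta}\to x_0$ as $t\to\infty$: the upper bound $f(t)\le e^{-tx_0}$ is immediate, while for any $\varepsilon>0$ we have $f(t)\ge e^{-t(x_0+\varepsilon)}{\bf P}(\zeta\le x_0+\varepsilon)$ with the probability positive by the very definition of $x_0$, so $-t^{-1}\ln f(t)\le x_0+\varepsilon+o(1)$. Setting $t=-(\ln s)/\mu$ and letting $s\to 0^+$ (so $t\to\infty$) gives $\log_s\psi(s)\to x_0/\mu$, and therefore $\theta^-=x_0/\mu$.

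The only delicate step is the derivative computation $\varphi'(1)=-1/\mu$; once that is in hand, everything reduces to standard Laplace-transform manipulations and two one-line inequalities (Jensen at the top, essinf at the bottom). Note that if $x_0>0$ we get $\theta^->0$ so a genuine extremal index exists only when the distribution of $\zeta$ is concentrated at a single point, i.e., $\zeta$ is constant a.s.
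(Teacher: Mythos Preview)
Your proof is correct and follows the same route as the paper: write ${\bf P}(M_n\le u_n(s))=f(n\varphi(u_n(s)))$, use $1-f(u)\sim\mu u$ (equivalently your $\varphi'(1)=-1/\mu$) to pass to the limit, then Jensen for $\theta^+\le 1$ and $\zeta\ge x_0$ a.s.\ for $\theta^-\ge x_0/\mu$, with both bounds shown to be attained as $s\to 1$ and $s\to 0$ respectively. You supply more detail on the Laplace-transform asymptotic $-t^{-1}\ln f(t)\to x_0$ than the paper does; your closing sentence is tangential and the connector ``so'' is misleading (the relevant condition for an extremal index in the sense of Definition~\ref{def1} is $\theta^+=\theta^-$, i.e.\ $x_0=\mu$), but the core argument is sound.
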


\begin{proof}
Since $1-f(u)\sim \mu u$, $u\to 0+0$, we have $\varphi(1-t)\sim t/\mu$, $t\to 1-0$.
Next,
$$
\begin{aligned}
{\bf P}(M_n\le u_n(s)&=f(n\varphi(u_n(s)))\\ &=f(n\varphi(1+(1+o(1))(\ln s)/n)))\\
&\to f(-(\ln s)/\mu),\quad n\to\infty.
\end{aligned}
$$
From Jensen's inequality, we obtain $\psi(s)={\bf E}s^{\zeta/\mu}\ge s^{{\bf
E}\zeta}=s$. On the other hand, since $\zeta>0$ a.s., we have $\psi(s)\le
s^{x_0/\mu}$. Hence, $\theta^+\le 1$ and $\theta^-\ge x_0/\mu$. Furthermore, we
obtain
$$
\lim_{s\to 0}\log_s\psi(s)=x_0/\mu,\qquad\lim_{s\to 1}\log_s\psi(s)=1,
$$
so these estimates are attained at the limit and we have $\theta^+=1$ and
$\theta^-=x_0/\mu$.
\end{proof}

In the case of the Clayton copula, the generator is $\varphi(t)=t^{-\alpha}-1$, and
the inverse function $f(u)=1/(1+u)^{1/\alpha}$ corresponds to the gamma distribution
with shape parameter $1/\alpha$, for which $x_0=0$, so $\theta^-=0$ and $\theta^+=1$.

In the case of the Frank copula, the generator is $\varphi(t)=-\ln((1-e^{-\alpha
t})/(1-e^{-\alpha}))$, and the inverse function
$f(u)=-(1/\alpha)\ln(1-(1-e^{-\alpha})e^{-u})$ corresponds to the discrete
distribution with probabilities ${\bf P}(\zeta=k)=(1-e^{-\alpha})^k/(\alpha k)$,
$k\ge 1$. Then $x_0=1$ and $\mu=f'(0)=(e^\alpha-1)/\alpha$, whence
$\theta^-=\alpha/(e^\alpha-1)$ and $\theta^+=1$.

Among the considered examples, only that of the Gumbel--Hougaard copula does not
match the conditions of Theorem~\ref{th5.1}, since it has generator $\varphi(t)=(-\ln
t)^\alpha$ with the inverse function $f(u)=\exp\{-u^{1/\alpha}\}$, $\alpha\ge 1$,
which corresponds to an asymmetric $(1/\alpha)$-stable distribution on~$\mathbb{R}_+$
without a finite mean.

To analyze such cases, we apply the following modification.

Note that if $\varphi(t)$ is a generator with a completely monotone inverse function,
then $\varphi(t)^\beta$, $\beta\ge 1$, is also a generator with a completely monotone
inverse function \cite[Lemma~4.6.4]{Nel}.

\begin{theorem}\label{th5.2}
Assume that an $n$-variate copula $C_n$ has generator\/
$\varphi_n(t)=\varphi(t)^{\beta_n}$ with\/ $\beta_n\ge\nolinebreak 1$,
$(\beta_n-1)\ln n\to\gamma\ge 0$, and for the generator\/ $\varphi(t)$ we have\/
$\mu<\infty$. Then\/ $\psi(s)=f(-e^{-\gamma}(\ln s)/\mu)$,
$\theta^-=(x_0/\mu)e^{-\gamma}$, and\/ $\theta^+=e^{-\gamma}$.\sloppy
\end{theorem}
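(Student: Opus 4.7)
The plan is to imitate the proof of Theorem~\ref{th5.1}, but to carry the extra factor $\beta_n$ through the computation and to exploit the hypothesis $(\beta_n-1)\ln n\to\gamma$ at exactly the step where the power of $n$ appears.

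First I would compute an explicit formula for ${\bf P}(M_n\le u_n(s))$. Since the copula $C_n$ is strictly Archimedean with generator $\varphi_n(t)=\varphi(t)^{\beta_n}$, its inverse is $\varphi_n^{-1}(u)=\varphi^{-1}(u^{1/\beta_n})=f(u^{1/\beta_n})$. Hence on the diagonal
$$
C_n(y,\ldots,y)=\varphi_n^{-1}(n\varphi_n(y))=f\bigl(n^{1/\beta_n}\varphi(y)\bigr),
$$
and therefore ${\bf P}(M_n\le u_n(s))=f\bigl(n^{1/\beta_n}\varphi(u_n(s))\bigr)$.

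Next I would pass to the limit in the argument of $f$. As in the proof of Theorem~\ref{th5.1}, the relation $1-f(u)\sim\mu u$ for $u\to 0+$ yields $\varphi(1-t)\sim t/\mu$ as $t\to 0+$, so with $u_n(s)=1+(1+o(1))(\ln s)/n$ one gets $\varphi(u_n(s))\sim -(\ln s)/(n\mu)$. Multiplying by $n^{1/\beta_n}$ leaves the factor $n^{1/\beta_n-1}=\exp((1-\beta_n)\beta_n^{-1}\ln n)$. From $(\beta_n-1)\ln n\to\gamma$ we also have $\beta_n\to 1$, so $n^{1/\beta_n-1}\to e^{-\gamma}$ and consequently $n^{1/\beta_n}\varphi(u_n(s))\to -e^{-\gamma}(\ln s)/\mu$. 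Continuity of $f$ then gives
$$
\psi(s)=\lim_{n\to\infty}{\bf P}(M_n\le u_n(s))=f\bigl(-e^{-\gamma}(\ln s)/\mu\bigr)={\bf E}s^{e^{-\gamma}\zeta/\mu}.
$$

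Finally, I would read off $\theta^\pm$ exactly as in Theorem~\ref{th5.1}. Jensen's inequality applied to the convex function $u\mapsto s^u$ (for $s\in(0,1)$ this becomes a reverse inequality) yields $\psi(s)\ge s^{e^{-\gamma}{\bf E}\zeta/\mu}=s^{e^{-\gamma}}$, while $\zeta\ge x_0$ a.s.\ gives $\psi(s)\le s^{x_0 e^{-\gamma}/\mu}$. These bounds force $\theta^+\le e^{-\gamma}$ and $\theta^-\ge x_0 e^{-\gamma}/\mu$; taking $s\to 1-0$ in $\log_s\psi(s)$ recovers $e^{-\gamma}$ (the factor in the derivative at $0$ of $f$ is $\mu$, which cancels), and taking $s\to 0+0$ recovers $x_0 e^{-\gamma}/\mu$ since the integral ${\bf E}s^{e^{-\gamma}\zeta/\mu}$ is asymptotically dominated by the mass near the left endpoint $x_0$.

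The only delicate point is the handling of $n^{1/\beta_n}$: one must resist the temptation to write $n^{1/\beta_n}\sim n$ (which would reproduce Theorem~\ref{th5.1}) and instead retain the multiplicative correction $e^{-\gamma}$ coming from $(\beta_n-1)\ln n\to\gamma$. Everything else is a direct transcription of the Theorem~\ref{th5.1} argument with $\mu$ replaced by $\mu e^\gamma$ in the right places.
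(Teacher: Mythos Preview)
Your argument is correct and follows the paper's proof almost verbatim in computing $\psi(s)$: both write the diagonal as $f\bigl(n^{1/\beta_n}\varphi(u_n(s))\bigr)$, use $\varphi(1-t)\sim t/\mu$, and extract $n^{1/\beta_n-1}\to e^{-\gamma}$ from the hypothesis $(\beta_n-1)\ln n\to\gamma$. The only difference is in the last step: the paper obtains $\theta^\pm$ by the substitution $r=s^{e^{-\gamma}}$, which reduces $\log_s\psi(s)$ to $e^{-\gamma}\log_r f(-(\ln r)/\mu)$ and then quotes the range $[x_0/\mu,1]$ already established in Theorem~\ref{th5.1}, whereas you re-run the Jensen and endpoint-limit argument from scratch---equally valid, just slightly less economical.
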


\begin{proof}
From $\varphi_n(t)=\varphi(t)^{\beta_n}$ it follows that $f_n(u)=f(u^{1/\beta_n})$.
We have
$$
\begin{aligned}
{\bf P}(M_n\le u_n(s))&=f_n(n\varphi_n(u_n(s)))\\
&=f(n^{1/\beta_n}\varphi(1+(1+o(1))(\ln s)/n)))\\ &=f(e^{((1-\beta_n)\ln
n)/\beta_n}(-(\ln s)/\mu))\\ &\to f(-e^{-\gamma}(\ln s)/\mu),\quad n\to\infty.
\end{aligned}
$$
Partial indices are obtained from the relation
$$
\log_s\psi(s)=\frac{\ln f(-e^{-\gamma}(\ln s)/\mu)}{\ln s}=e^{-\gamma}\frac{\ln
f(-(\ln r)/\mu)}{\ln r}, \quad r=s^{e^{-\gamma}}\in (0,1),
$$
where the fraction on the right-hand side is the logarithm of the extremal function
from Theorem~\ref{th5.1}, taking values from $x_0/\mu$ to $1$.
\end{proof}

In particular, the result of Example~\ref{ex5.1} for the Gumbel--Hougaard copula is
obtained with $\varphi_n(t)=(-\ln t)^{\alpha_n}$, where $\varphi(t)=-\ln t$
corresponds to $\zeta=1$ a.s.\ and the independence copula.

Thus, it is seen that in the considered models with copulas there may occur any
$\theta\in [0,1]$ and any $0\le\theta^-<\theta^+\le 1$.

Now let us consider one instructive example with copulas and random series sizes.

\begin{example}\label{ex5.4}
Let the series sizes satisfy the condition $\nu_n/n\stackrel{d}{\to}\zeta$,
$n\to\infty$, where $\zeta$ has a stable distribution with the Laplace--Stieltjes
transform ${\bf E}e^{-u\zeta}=e^{-u^\beta}$, $0<\beta<1$, and assume that in each
series the random variables (independent of $\nu_n$) are related by the
Gumbel--Hougaard copula with $\alpha_n>1$, $(\alpha_n-1)\ln n\to\gamma>0$,
$n\to\infty$ (see Example~\ref{ex5.1}).

First, assume that $u_n(s)^n\to e^{-\tau}$, $n\to\infty$, $\tau>0$; then
$$
{\bf E}u_n(s)^{\nu_n}={\bf E}(u_n(s)^n)^{\nu_n/n}\to {\bf
E}e^{-\tau\zeta}=e^{-\tau^\beta},\quad n\to\infty.
$$
Take $\tau=(-\ln s)^{1/\beta}$; then ${\bf E}u_n(s)^{\nu_n}\to s$, as required.

Next, we have
\begin{equation}\label{p335}
\begin{aligned}[b]
{\bf P}(M_n\le u_n(s))&=u_n(s)^{\nu_n^{1/\alpha_n}}
=\left(u_n(s)^{n^{1/\alpha_n}}\right)^{(\nu_n/n)^{1/\alpha_n}}\\ &\to{\bf E}e^{-\tau
e^{-\gamma}\zeta}=e^{-(e^{-\gamma}\tau)^\beta}=s^{e^{-\gamma\beta}},\quad n\to\infty.
\end{aligned}
\end{equation}
Hence, the extremal index in the sense of Definition~\ref{def1} is
$e^{-\gamma\beta}$.

On the other hand, for any $\theta>0$ we have
$$
{\bf E}u_n(s)^{\theta\nu_n}\to {\bf
E}e^{-\tau\theta\zeta}=e^{-(\theta\tau)^\beta}=s^{\theta^\beta},\quad n\to\infty,
$$
from which together with \eqref{p335} it follows that the extremal index in the sense
of Definition~\ref{def2} is~$e^{-\gamma}$.

Thus, the system has two \emph{different\/} extremal indices according to two
different definitions!
\end{example}

In all the previously considered models, the classical property \eqref{Mhat} remained
valid in the form $\psi(s)\ge s$ for all $s\in [0,1]$. To conclude with, consider an
example where it is violated. In this example, the symmetric dependence of random
variables in a series can be described by a~copula, but it is simpler to do this
constructively, using a direct construction.

\begin{example}\label{ex5.5}
Let $\eta_{n,m}$, $m\ge 1$, $n\ge 1$, be independent and have the uniform
distribution on~$[0,1]$; $\nu_n=n$; $\kappa_n$ take values from $1$ to $n$
equiprobably, being independent of $\eta_{n,m}$, $1\le m\le n$; $\gamma>0$. Put
$$
\xi_{n,m}=
\begin{cases}
\eta_{n,m}^{1/(\gamma n)}, & m=\kappa_n,\\ \eta_{n,m}, & m\ne\kappa_n.
\end{cases}
$$
Then the joint distribution function of $\xi_{n,m}$, $1\le m\le n$, is of the form
$$
F^{(n)}(x_1,\ldots, x_n)=\left(\prod_{m=1}^n x_m\right)\left(\frac1n\sum_{m=1}^n
x_m^{\gamma n-1}\right),
$$
whence
$$
F_n(x)=x\left(1+\frac{x^{\gamma n-1}-1}{n}\right),\qquad {\bf P}(M_n\le
x)=x^{(1+\gamma)n-1}.
$$
Letting $u_n(s)=1-(1+o(1))\tau/n$, $\tau>0$, we obtain as $n\to\infty$
$$
F_n(u_n(s))^n\to e^{-\tau}\exp\{e^{-\gamma\tau}-1\}=s,\qquad {\bf P}(M_n\le
u_n(s))\to e^{-(1+\gamma)\tau}=\psi(s),
$$
whence we can explicitly find the inverse extremal function
$$
\psi^{-1}(u)=u^{1/(1+\gamma)}\exp\{u^{\gamma/(1+\gamma)}-1\},
$$
for which we have $\psi^{-1}(u)>u$ for all $u\in (0,1)$ and hence $\psi(s)<s$ for all
$s\in (0,1)$. In this case it can be shown that $\theta^-=1$ and
$\theta^+=1+\gamma>1$.
\end{example}

\section{Threshold Models}\label{sec6}

Up to now, we considered models where $\nu_n$ was determined by reasons external with
respect to the variables $\{\xi_{n,m}\}$. Now we introduce models where $\nu_n$ is
the stopping time with respect to the sequence $\{\xi_{n,m}$, $m\ge 1\}$, where
$\xi_{n,m}$, $m\ge 1$, are independent and uniformly distributed on~$[0,1]$, and
stopping occurs when a current random variable passes over some threshold value.

Such models may occur, for example, in problems of automated search for objects
possessing certain properties by simple exhaustive search.

Note that papers \cite{Novak-2013, Novak-1991} considered a model of maxima of random
variables where stopping occurred at the time of threshold passage (namely, time $t$)
by not the current variable but their accumulated sum. This is a generalization of
the classical problem on the longest success series in Bernoulli trials
\cite[Section~8.5]{EKM}. However, in this case the stopping time simply grows
asymptotically proportionally to the threshold, and there are no such interesting
effects as in the models in question.

\begin{example}\label{ex6.1}
Consider a number sequence $a_n\in (0,1)$, $n\ge 1$, with $a_n\to 1$, $n\to \infty$.
Denote $\varepsilon_n=1-a_n>0$; then $\varepsilon_n\to 0$, $n\to\infty$. Put
$\nu_n=\min\{m\ge 1: \xi_{n,m}>a_n\}$. Then
\begin{equation}\label{mot}
\begin{aligned}[b]
{\bf P}(M_n\le u_n(s))&={\bf P}(\xi_{n,\nu_n}\le u_n(s))\\ &={\bf P}(\xi_{1,1}\le
u_n(s)|\xi_{1,1}>a_n)\\ &=0\vee (u_n(s)-a_n)/\varepsilon_n\\ &=0\vee
(1-(1-u_n(s))/\varepsilon_n),
\end{aligned}
\end{equation}
where $u_n(s)$ are determined by the condition
\begin{equation}\label{eot}
{\bf E}u_n(s)^{\nu_n}=\frac{\varepsilon_n u_n(s)}{1-(1-\varepsilon_n)u_n(s)}\to
s,\quad n\to\infty,
\end{equation}
since $\nu_n$ have the geometric distribution (starting from 1) with parameter
$\varepsilon_n$. Equation \eqref{eot} implies
\begin{equation}\label{1mun}
1-u_n(s)\sim \varepsilon_n\frac{1-s}{s},\quad n\to\infty.
\end{equation}
Substituting this into \eqref{mot} and passing to the limit, we obtain $\psi(s)=0\vee
(2-1/s)$ according to Definition~\ref{def1}. In this case, as in Example~\ref{ex5.5},
we have $\psi(s)<s$ for all $s\in (0,1)$. We have $\psi(s)=0$, and therefore
$\log_s\psi(s)=+\infty$ for $s\in [0,1/2]$, or $\log_s\psi(s)>1$ for $s\in (1/2,1)$,
and also $\log_s\psi(s)\to 1$, $s\to 1$. Hence, $\theta^-=1$ and $\theta^+=+\infty$.

Surprisingly, the result does not depend on the choice of a sequence $a_n$, $n\ge 1$.

What can be said in this case about the extremal index in the sense of
Definition~\ref{def2}? From~\eqref{eot}, taking into account \eqref{1mun}, we obtain
$$
{\bf E}u_n(s)^{\theta\nu_n}=\frac{\varepsilon_n
u_n(s)^\theta}{1-(1-\varepsilon_n)u_n(s)^\theta}\to
\frac{s}{\theta+(1-\theta)s},\quad n\to\infty,
$$
but the extremal function is not of this form; hence, there is no extremal index in
the sense of Definition~\ref{def2}.
\end{example}

Now consider a model with random thresholds $\zeta_n$, $n\ge 1$. Let $0<\zeta_n<1$
a.s.; $\xi_{n,m}$, $m\ge 1$, are independent of $\zeta_n$, and $\nu_n=\min\{m\ge 1:
\xi_{n,m}>\zeta_n\}$.

\begin{theorem}\label{th6.1}
Let\/ $n(1-\zeta_n)\stackrel{L_1}{\to}\zeta>0$, $n\to\infty$, ${\bf E}\zeta=1$.
Then\/ $\psi(s)=g(f^{-1}(s))$, where $f(t)={\bf E}(\zeta/(t+\zeta))$ and\/ $g(t)={\bf
E}(\zeta-t)_+$.
\end{theorem}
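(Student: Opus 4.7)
The plan is to condition on the random threshold $\zeta_n$, exploit the resulting explicit law of $(\nu_n,M_n)$, rescale via $Z_n=n(1-\zeta_n)$, and pass to the limit using the hypothesis $Z_n\to\zeta$ in $L_1$.

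Two conditional facts drive everything. Given $\zeta_n$, the waiting time $\nu_n=\min\{m\ge 1:\xi_{n,m}>\zeta_n\}$ is geometric on $\{1,2,\ldots\}$ with success probability $1-\zeta_n$, while the overshoot $M_n=\xi_{n,\nu_n}$ is (conditionally on $\zeta_n$ alone) uniformly distributed on $(\zeta_n,1]$, independently of the value of $\nu_n$. The latter is the standard ``uniform on $[0,1]$ conditioned on exceeding a level is uniform on the remainder''; crucially, its law does not depend on $\nu_n$, so ${\bf P}(M_n\le u\mid\zeta_n)$ can be read off from $\zeta_n$ alone.

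For the normalization I would set $u_n(s)=1-\tau/n$ with $\tau=\tau(s)>0$ to be determined. The geometric probability generating function gives
$$
{\bf E}\bigl[u_n(s)^{\nu_n}\mid\zeta_n\bigr]=\frac{(1-\zeta_n)\,u_n(s)}{1-\zeta_n u_n(s)};
$$
substituting $\zeta_n=1-Z_n/n$ and $u_n(s)=1-\tau/n$ and clearing common factors of $1/n$, the ratio tends pointwise to $Z_n/(Z_n+\tau)$. Since the integrand is bounded by $1$ and $Z_n\to\zeta$ in $L_1$ (hence in distribution, with $\zeta>0$ a.s.), bounded convergence yields ${\bf E}u_n(s)^{\nu_n}\to{\bf E}[\zeta/(\zeta+\tau)]=f(\tau)$. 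The function $f$ is continuous and strictly decreasing from $f(0)=1$ to $0$, so $\tau=f^{-1}(s)$ is well defined for each $s\in(0,1)$, achieving the required normalization.

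For ${\bf P}(M_n\le u_n(s))$, the uniform-on-$(\zeta_n,1]$ law gives
$$
{\bf P}\bigl(M_n\le u_n(s)\mid\zeta_n\bigr)=\Bigl(\frac{u_n(s)-\zeta_n}{1-\zeta_n}\Bigr)_+=\Bigl(1-\frac{\tau}{Z_n}\Bigr)_+,
$$
so the theorem reduces to showing ${\bf E}(1-\tau/Z_n)_+\to g(\tau)$. The main obstacle is precisely this final passage to the limit: a bounded-continuous-mapping argument applied to the integrand (which extends continuously by $0$ at $z=0$) yields only ${\bf E}(1-\tau/\zeta)_+$, and recovering the claimed form ${\bf E}(\zeta-\tau)_+$ is where the stronger $L_1$ hypothesis and the normalization ${\bf E}\zeta=1$ must both be invoked, e.g.\ via a size-biasing/change-of-measure argument that absorbs the weight $1/\zeta$ against the unit mean. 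This identification is the substantive step of the proof; once it is in hand, combining with $\tau=f^{-1}(s)$ gives $\psi(s)=g(f^{-1}(s))$.
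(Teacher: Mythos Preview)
Your conditional computations are correct, and you agree with the paper on the normalization $\tau=f^{-1}(s)$. The gap you leave open at the end, however, cannot be closed: bounded convergence gives ${\bf E}(1-\tau/Z_n)_+\to{\bf E}(1-\tau/\zeta)_+$, and this is in general \emph{not} equal to $g(\tau)={\bf E}(\zeta-\tau)_+$ under the sole constraint ${\bf E}\zeta=1$. No size-biasing identity rescues it; for $\zeta\in\{1-\delta,1+\delta\}$ with equal probability (precisely the $\zeta$ of Example~\ref{ex6.2}) and $\tau=1$ one gets ${\bf E}(\zeta-1)_+=\delta/2$ but ${\bf E}(1-1/\zeta)_+=\delta/(2(1+\delta))$. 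So the ``substantive step'' you defer is actually a false identity, and the stated formula $\psi=g\circ f^{-1}$ cannot be established from your (correct) starting point.

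The paper's own proof contains the same discrepancy, hidden in its first line ${\bf P}(M_n\le u_n(s))={\bf P}(\xi_{1,1}\le u_n(s)\mid \xi_{1,1}>\zeta_n)$. That identity is valid for a deterministic threshold (Example~\ref{ex6.1}) but fails when $\zeta_n$ is random: conditioning on the event $\{\xi_{1,1}>\zeta_n\}$ size-biases the law of $\zeta_n$ by the factor $1-\zeta_n$, whereas the true law of $M_n$ places the weight $1/(1-\zeta_n)$ \emph{inside} the expectation, giving your exact formula ${\bf P}(M_n\le u_n(s))={\bf E}[(u_n(s)-\zeta_n)_+/(1-\zeta_n)]$. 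The extremal function actually produced by the model is therefore $\psi(s)={\bf E}\bigl(1-f^{-1}(s)/\zeta\bigr)_+$, not $g(f^{-1}(s))$; this also alters the conclusions drawn afterwards (for instance, the argument of Corollary~\ref{cor6.1}(2) then yields $\theta_1=1$ rather than $1/{\bf E}\zeta^{-1}$).
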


\begin{proof}
Given that $\zeta_n=x\in (0,1)$, the series size $\nu_n$ has geometric distribution
with parameter $1-x$. Hence it follows that $(1-\zeta_n)\nu_n\stackrel{d}{\to}\eta$
and $\nu_n/n\stackrel{d}{\to}\eta/\zeta$, $n\to\infty$, where $\eta$ has standard
exponential distribution and is independent of $\zeta$. Denote $f(t)={\bf
E}e^{-t\eta/\zeta}$; then $f(t)={\bf E}(\zeta/(t+\nolinebreak\zeta))$.

Let $\tau>0$; then
$$
{\bf E}(1-\tau/n)^{\nu_n}={\bf E}((1-\tau/n)^n)^{\nu_n/n}\to {\bf
E}e^{-\tau\eta/\zeta}=f(\tau),\quad n\to\infty.
$$
Thus, ${\bf E}u_n(s)^{\nu_n}\to s$ implies $u_n(s)=1-(1+o(1))f^{-1}(s)/n$,
$n\to\infty$.

We obtain
$$
\begin{aligned}
{\bf P}(M_n\le u_n(s))&={\bf P}(\xi_{1,1}\le u_n(s)|\xi_{1,1}>\zeta_n)\\ &={\bf
P}(\zeta_n<\xi_{1,1}\le u_n(s))/\varepsilon_n\\ &={\bf
E}(1-(1+o(1))f^{-1}(s)/n-\zeta_n)_+/\varepsilon_n\\ &\to {\bf
E}(\zeta-f^{-1}(s))_+,\quad n\to\infty,
\end{aligned}
$$
as required.
\end{proof}

Recall a formula
\begin{equation}\label{forepl}
{\bf E}(\zeta-t)_+=\int_t^{+\infty}{\bar F}_\zeta(x)\,dx,
\end{equation}
convenient for computations, which is obtained via integration by parts.

\begin{example}\label{ex6.2}
Let $\zeta$ equiprobably take values $1-\delta$ and $1+\delta$, $0<\delta<1$ (the
case $\delta=0$ reduces to Example~\ref{ex6.1}). Then
$$
f(t)=\frac12\left(\frac{1}{1+t/(1-\delta)}+\frac{1}{1+t/(1+\delta)}\right)
=\frac{(1+t)-\delta^2}{(1+t)^2-\delta^2},
$$
whence
$$
f^{-1}(s)=\frac{1+\sqrt{1-4s(1-s)\delta^2}}{2s}-1
$$
and
$$
\psi(s)=\frac12(1-\delta-f^{-1}(s))_++\frac12(1+\delta-f^{-1}(s))_+.
$$

We have$\psi(s)=0$ for $0<s<f(1+\delta)=(2-\delta)/4$, so $\theta^+=+\infty$.
\end{example}

In the general case one can analyze the asymptotic behavior of the extremal function
as $s\to 0$ and $s\to 1$. Denote
$$
\theta_0=\lim_{s\to 0}\log_s\psi(s),\qquad \theta_1=\lim_{s\to 1}\log_s\psi(s).
$$
One may claim that $\theta^-\le \theta_0\wedge \theta_1$ and $\theta^+\ge
\theta_0\vee \theta_1$.

\begin{corollary}\label{cor6.1}
\strut
\begin{enumerate}[\rm(1)]\addtolength{\itemsep}{-5pt}
\item\vskip-5pt
If\/ ${\bar F}_\zeta(x)\sim Cx^{-\alpha}$, $x\to\infty$, $C>0$, $\alpha>1$, then\/
$\psi(s)\sim Cs^{\alpha-1}/(\alpha-1)$, $s\to 0$, and\/ $\theta_0=\alpha-1$. If
${\bar F}_\zeta(x)$ decreases faster than any power, then\/ $\theta_0=+\infty$.
\item
If\/ ${\bf E}\zeta^{-1}<\infty$, then\/ $\theta_1=1/{\bf E}\zeta^{-1}$. If\/ ${\bf
E}\zeta^{-1}=\infty$, then\/ $\theta_1=0$.
\end{enumerate}
\end{corollary}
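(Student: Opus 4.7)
The plan is to exploit the factorization $\psi = g \circ f^{-1}$ from Theorem~\ref{th6.1}, extract one-sided asymptotics of $f$ and $g$ separately (at the relevant endpoint), and combine them. All the work reduces to dominated/monotone convergence for $f$ and to the integral representation \eqref{forepl} for $g$.

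For part (1), first analyze $f$ at $+\infty$: writing $tf(t)=\mathbf{E}(\zeta/(1+\zeta/t))$, the integrand increases to $\zeta$ and is dominated by $\zeta\in L^1$, so by monotone (or dominated) convergence $tf(t)\to \mathbf{E}\zeta=1$, i.e.\ $f(t)\sim 1/t$ as $t\to\infty$; by monotone inversion, $f^{-1}(s)\sim 1/s$ as $s\to 0$. Next analyze $g$ at $+\infty$ via \eqref{forepl}: under $\bar F_\zeta(x)\sim Cx^{-\alpha}$, Karamata's theorem for integrals of regularly varying tails gives $g(t)=\int_t^{+\infty}\bar F_\zeta(x)\,dx\sim Ct^{-(\alpha-1)}/(\alpha-1)$. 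Composing,
$$
\psi(s)=g(f^{-1}(s))\sim \frac{C}{\alpha-1}\bigl(f^{-1}(s)\bigr)^{-(\alpha-1)}\sim \frac{C}{\alpha-1}s^{\alpha-1},
$$
so $\log_s\psi(s)\to\alpha-1$. If $\bar F_\zeta(x)$ decays faster than any power, then so does $g(t)$ (again by \eqref{forepl}), and since $f^{-1}(s)\sim 1/s$ this forces $\psi(s)$ to decay faster than any power of $s$, giving $\theta_0=+\infty$.

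For part (2), analyze $f$ at $0$ using $1-f(t)=t\,\mathbf{E}(1/(t+\zeta))$. Since $1/(t+\zeta)\uparrow 1/\zeta$ as $t\downarrow 0$, monotone convergence gives $\mathbf{E}(1/(t+\zeta))\to \mathbf{E}\zeta^{-1}\in (0,+\infty]$. In parallel, analyze $g$ at $0$: decomposing
$$
g(t)-1=\mathbf{E}[(\zeta-t)_+-\zeta]=-t\,\mathbf{P}(\zeta>t)-\mathbf{E}[\zeta\,\mathbf{1}_{\zeta\le t}],
$$
and using $\zeta>0$ a.s., the first term is $-t(1+o(1))$ and the second is bounded by $t\,\mathbf{P}(\zeta\le t)=o(t)$, so $g(t)=1-t+o(t)$ as $t\to 0+$. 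If $\mathbf{E}\zeta^{-1}<\infty$, then $1-f(t)\sim t\mathbf{E}\zeta^{-1}$, hence $f^{-1}(s)\sim (1-s)/\mathbf{E}\zeta^{-1}$, so $\psi(s)=1-(1-s)/\mathbf{E}\zeta^{-1}+o(1-s)$, and the relations $\ln\psi(s)\sim-(1-s)/\mathbf{E}\zeta^{-1}$, $\ln s\sim-(1-s)$ yield $\theta_1=1/\mathbf{E}\zeta^{-1}$. If $\mathbf{E}\zeta^{-1}=\infty$, then $(1-f(t))/t\to\infty$, which means $f^{-1}(s)=o(1-s)$, so $1-\psi(s)=f^{-1}(s)+o(f^{-1}(s))=o(1-s)$; taking logs gives $\log_s\psi(s)\to 0$, i.e.\ $\theta_1=0$.

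The one slightly delicate point is the expansion $g(t)=1-t+o(t)$, which must be verified without any smoothness of $F_\zeta$ at $0$ (it relies solely on $\zeta>0$ a.s.\ to conclude $\mathbf{E}[\zeta\mathbf{1}_{\zeta\le t}]=o(t)$). Apart from that, the argument is a straightforward combination of dominated/monotone convergence with Karamata's theorem, and every asymptotic equivalence composes cleanly because $f$ and $g$ are monotone.
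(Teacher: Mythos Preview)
Your proof is correct and follows essentially the same route as the paper: both arguments derive $f(t)\sim 1/t$ and $1-f(t)\sim t\mathbf{E}\zeta^{-1}$, pass to $f^{-1}$, obtain the corresponding asymptotics of $g$ from \eqref{forepl}, and compose. You supply more justification (dominated/monotone convergence, Karamata, an explicit decomposition for $1-g(t)$) and, in particular, give a direct argument for the case $\mathbf{E}\zeta^{-1}=\infty$, whereas the paper dispatches it in one line by ``passing to the limit''; but the underlying strategy is identical.
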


\begin{proof}
(1) Note that $f(t)={\bf E}(\zeta/(\zeta+t))\sim {\bf E}\zeta/t=1/t$, $t\to\infty$.
Therefore, $f^{-1}(s)\sim 1/s$, $s\to 0$. Now ${\bar F}_\zeta(x)\sim Cx^{-\alpha}$,
$x\to\infty$, implies by equation \eqref{forepl} that $g(t)\sim
Ct^{-(\alpha-1)}/(\alpha-1)$, $t\to\infty$. Hence, $\psi(s)=g(f^{-1}(s))\sim
Cs^{\alpha-1}/(\alpha-1)$, $s\to 0$, and $\theta_0=\alpha-1$.

If ${\bar F}_\zeta(x)=o(x^N)$, $x\to\infty$, then $g(t)=o(t^{N-1})$, $t\to\infty$,
and $\psi(s)=o(s^{N-1})$, $s\to 0$, for any $N>0$, whence $\theta_0=+\infty$.

(2) For ${\bf E}\zeta^{-1}<\infty$ we have $1-f(t)={\bf E}(t/(\zeta+t))\sim t{\bf
E}\zeta^{-1}$, $t\to 0$. Hence, $f^{-1}(s)\sim (1-s)/{\bf E}\zeta^{-1}$, $s\to 1$.
Furthermore, $1-g(t)\sim t$, $t\to 0$. Therefore, $1-\psi(s)\sim (1-s)/{\bf
E}\zeta^{-1}$, $s\to 0$, whence $\theta_1=1/{\bf E}\zeta^{-1}$. The result for ${\bf
E}\zeta^{-1}=\infty$ is obtained by passing to the limit.
\end{proof}

By Corollary~\ref{cor6.1} in Example~\ref{ex6.2} we obtain $\theta_0=+\infty$ and
$\theta_1=1-\delta^2\in (0,1)$.

It is clear that if one of the exponents $\theta_0$ and $\theta_1$ is greater than 1
and the other is smaller, then the graph of $\psi(s)$ necessarily crosses the
diagonal. Then property \eqref{Mhat} in the form $\psi(s)\ge s$ holds for some $s\in
(0,1)$ and is violated for some other.

\section{Conclusion}\label{sec7}

We have generalized the notion of the extremal index of a stationary random sequence
to a~series scheme with random lengths (by two definitions). We have studied
properties of the new extremal indices. We have considered various applications of
these indices to models of information networks and biological populations, models
with copulas, and threshold models. We gave examples where there exist both extremal
indices, only one of them, or none. In cases where the extremal index in the sense of
the first definition does not exist, we have found partial indices. Thus, we have
made a number of important steps in constructing a new mathematical apparatus, which
is of both theoretical and applied importance in the description of extremal behavior
of various systems.

Of course, the research of the new extremal indices, their properties, and their
applications cannot be covered by a single paper. This paper is rather intended to
open a cycle of papers---or perhaps a whole scientific direction---that other
researchers can join, as it happens in the study of the classical extremal index.

\section*{Acknowledgements}

The research was supported by the Russian Foundation for Basic Research, project
no.~14-01-00075.

\medskip
{\large Contributors}

~

{\bf Lebedev Alexey V.} (b. 1971) --- Dr. Sci. in physics and mathematics, associate
professor, Department of Probability Theory, Faculty of Mechanics and Mathematics,
Lomonosov Moscow State University, Leninskiye Gory, Moscow 119991, Russian
Federation; \url{avlebed@yandex.ru}

\end{document}